\documentclass[a4paper,12pt]{amsart}

\usepackage{amssymb,amsmath,amsthm,latexsym}
\usepackage{amscd}
\setlength{\textwidth}{15cm}
\setlength{\textheight}{22.6cm}
\setlength{\topmargin}{-0.25cm}
\setlength{\headheight}{1em}
\setlength{\headsep}{0.5cm}
\setlength{\oddsidemargin}{0.40cm}
\setlength{\evensidemargin}{0.40cm}
\newtheorem{Theorem}{Theorem}[section]

\newtheorem{Definition}[Theorem]{Definition}
\newtheorem{Proposition-Definition}[Theorem]{Proposition-Definition}
\newtheorem{Lemma}[Theorem]{Lemma}

\newtheorem{Example}[Theorem]{Example}

\newtheorem{Observation}[Theorem]{Observation}
\newtheorem{Question}[Theorem]{Question}

\def\Z{{\mathbb{{Z}}}}
\def\N{{\mathbb{N}}}
\def\Q{{\mathbb{Q}}}

\def\F{{\mathbb{F}}}

\def\m{{\mathfrak{m}}}

\def\rank{\mathop{\mathrm{rank}}\nolimits}

\def\max{\mathop{\mathrm{max}}\nolimits}

\def\deg{\mathop{\mathrm{deg}}\nolimits}

\def\Hom{\mathop{\mathrm{Hom}}\nolimits}

\def\initial{\mathop{\mathrm{in}}\nolimits}
\begin{document}
\title{One-dimensional rings of finite F-representation type}
\author{Takafumi Shibuta}
\address{
\begin{flushleft}
	\hspace{0.3cm} Department of Mathematics, Rikkyo University, Nishi-Ikebukuro, Tokyo 171-8501, Japan\\
	\hspace{0.3cm} JST, CREST, Sanbancho, Chiyoda-ku, Tokyo, 102-0075, Japan\\
\end{flushleft}
}
\email{shibuta@rikkyo.ac.jp}
\date{}
\baselineskip 15pt
\footskip = 32pt
\begin{abstract}
We prove that a complete local or graded one-dimensional domain of prime characteristic has finite F-representation type if 
its residue field is algebraically closed or finite, 
and present examples of a complete local or graded one-dimensional domain which does not have finite F-representation type 
with a perfect residue field. 
We also present some examples of higher dimensional rings of finite F-representation type. 
\end{abstract}
\maketitle
\section{Introduction}
Smith and Van den Bergh \cite{SV} introduced the notion of finite F-representation type 
as a characteristic $p$ analogue of the notion of finite Cohen-Macaulay representation type. 
Rings of finite F-representation type satisfy several nice properties. 
For example, Seibert \cite{S} proved that the Hilbert-Kunz multiplicities are rational numbers, 
Yao \cite{Y} proved that tight closure commutes with localization in such rings, 
and Takagi--Takahashi \cite{TT} proved that if $R$ is a Cohen-Macaulay ring of finite F-representation type with canonical module $\omega_R$, 
then $H^n_I(\omega_R)$ has only finitely many associated primes for any ideal $I$ of $R$ and any integer $n$. 
However, it is difficult to determine whether a given ring have finite F-representation type. 
In this paper, we prove the following results: 
Let $A$ be a complete or graded one-dimensional domain of prime characteristic with the residue field $k$. Then 
\begin{enumerate}
\item if $k$ is algebraically closed, then any finitely generated $A$-module has finite F-representation type, 
\item if $k$ is finite, then $A$ has finite F-representation type, 
\item there exist examples of rings which do not have finite F-representation type with $k$ perfect. 
\end{enumerate}

In the last section, we give some examples of finite F-representation type of dimension higher than one. 
There is a question posed by Brenner: 
\begin{Question}[Brenner]\upshape
Let $k$ be an algebraically closed field of characteristic $p$. 
Then does the ring $k[{x}, {y}, {z}]/({x}^2 + {y}^3 + {z}^7)$ have finite F-representation type? 
\end{Question}
We prove that $k[{x}, {y}, {z}]/({x}^2 + {y}^3 + {z}^7)$ has finite F-representation type if $p=2,3$ or $7$. 
\section{Rings of finite F-representation type}
Throughout this paper, all rings are Noetherian commutative rings of prime characteristic $p$. 
We denote by $\N=\{0,1,2,\dots\}$ the set of non-negative integers. 
If $R=\bigoplus_{i\ge 0}R_i$ is an $\N$-graded ring, we assume that $\gcd\{i\mid R_i\neq 0\}=1$. 
We denote by $R_+=\bigoplus_{i>0}R_i$ the unique homogeneous maximal ideal of $R$. 
 
The Frobenius map $F : R \to R$ is defined by sending $r$ to $r^p$ for all $r\in R$. 
For any $R$-module $M$, we denote by $^eM$ the module $M$ with its $R$-module structure pulled back via the $e$-times 
iterated Frobenius map $F^e:r\mapsto r^{p^e}$, 
that is, $^eM$ is the same as $M$ as an abelian group, 
but its $R$-module structure is determined by $r\cdot m := r^{p^e}m$ for $r \in R$ and $m \in M$. 
If $^1R$ is a finitely generated $R$-module (or equivalently $^eR$ is a finitely generated
$R$-module for every $e\ge 0$), we say that $R$ is F-finite. 
In general, if $^1M$ is a finitely generated $R$-module, we say that $M$ is F-finite. 
Remark that when $R$ is reduced, $^eR$ is isomorphic to $R^{1/q}$ where $q=p^e$. 
If $R$ and $M$ are $\Z$-graded, then $^eM$ carries a $\Q$-graded $R$-module structure: 
We grade $^eM$ by putting $[^eM]_\alpha = [M]_{p^e\alpha}$ if $\alpha\in \frac{1}{p^e}\Z$, otherwise $[M]_{\alpha} = 0$. 
For $\Q$-graded modules $M$ and $N$ and a rational number $r$, 
we say that a homomorphism $\phi:M\to N$ is {\it homogeneous (of degree $r$)} if $\psi(M_s)\subset N_{s+r}$ for all $s\in \Q$. 
We denote by $\Hom_r(M,N)$ the group of homogeneous homomorphisms of degree $r$, 
and set $\underline{\Hom}_R(M,N)=\bigoplus_{r\in\Q}\Hom_r(M,N)$. 

Let $I$ be an ideal of $R$. Then for any $q = p^e$, we use $I^{[q]}$ to denote the ideal 
generated by $\{x^q \mid  x\in I\}$. 
For any $R$-module $M$, it is easy to see that 
$(R/I)\otimes_R {}^eM\cong {}^eM/(I\cdot {}^eM) \cong {}^e(M/I^{[q]}M)$. 
Since the functor $^e(-)$ is a exact functor, and $I$ and $I^{[q]}$ have the same radical, 
we have $^eH_I(M)\cong{}^eH_{I^{[q]}}(M)\cong H_I(^eM)$.  
\begin{Definition}\upshape
\mbox{}
\begin{enumerate}
\item Let $R$ be a ring of prime characteristic $p$, and $M$ a finitely generated $R$-module.
We say that $M$ has {\it finite F-representation type} by finitely generated $R$-modules $M_1,\cdots,M_s$ 
if for every $e\in \N$, the $R$-module $^eM$ is isomorphic to 
a finite direct sum of the $R$-modules $M_1,\cdots,M_s$, that is, 
there exist nonnegative integers $n_{e,1},\cdots, n_{e,s}$ such that 
\[
^eM \cong \bigoplus_{i=1}^{s} M_i^{\oplus n_{e,i}}. 
\]
We say that a ring $R$ has finite F-representation type if $R$ has finite F-representation type as an $R$-module. 
\item
Let $R=\bigoplus_{n\ge 0}R_n$ be a Noetherian graded ring of prime characteristic $p$, and 
$M$ a finitely generated graded $R$-module. 
We say that $M$ has {\it finite graded F-representation type} by finitely generated $\Q$-graded $R$-modules $M_1,\dots,M_s$ 
if for every $e\in \N$, the $\Q$-graded $R$-module $^eM$
is isomorphic to a finite direct sum of the $\Q$-graded $R$-modules $M_1,\dots,M_s$ up to shift of grading,
that is, there exist non-negative integers $n_{ei}$ for $1 \le i \le s$, and rational numbers ${a}^{(e)}_{ij}$ 
for $1 \le j \le n_{ei}$ such that there exists a $\Q$-homogeneous isomorphism 
\[
^eM \cong \bigoplus_{i=1}^s \bigoplus_{j=1}^{n_{ei}}M_i({a}^{(e)}_{ij}),
\]
where $M(a)$ stands for the module obtained from $\Q$-graded module $M$ by the shift of grading by $a\in \Q$; 
$[M(a)]_b := M_{a+b}$ for $b\in \Q$. 
We say that a graded ring $R$ has finite graded F-representation type 
if $R$ has finite graded F-representation type as a graded $R$-module. 
\end{enumerate}
\end{Definition}
Note that if $M$ has finite F-representation type, then $M$ is F-finite. 
In this paper, we mainly investigate the cases where $R$ is a complete local Noetherian ring or 
$\N$-graded ring $R = \bigoplus_{i\ge 0} R_i$ with $R_0=k$ a field. 
Remark that the Krull-Schmidt theorem holds in these cases. 
\begin{Example}\upshape\label{exffrt}
\mbox{}
\begin{enumerate}
\renewcommand{\theenumi}{\roman{enumi}}
\renewcommand{\labelenumi}{\rm{(\theenumi)}}
\item 
Direct sums, localizations, or completions of modules of finite F-representation type also have finite F-representation type. 
\item
Let $R$ be an F-finite regular local ring or a polynomial ring $k[t_1,\dots,t_r]$ over a
field $k$ of characteristic $p>0$ such that $[k:k^p]<\infty$. 
Then $R$ has finite F-representation type. 
\item
Let $R$ be Cohen-Macaulay local (resp. graded) ring with finite (resp. graded) Cohen-Macaulay representation type, 
that is, there are finitely many isomorphism classes of indecomposable (resp. graded) maximal Cohen-Macaulay $R$-modules. 
Then every finitely generated (resp. graded)  maximal Cohen-Macaulay $R$-modules has finite F-representation type. 
\item
Let $(R,\m,k)$ be an F-finite local ring (resp. $\N$-graded ring with $k=R/R_+\cong R_0$), and $M$ an $R$-module of finite length $\ell(M)$. Then $M$ has finite F-representation type; 
$^eM\cong k^{\ell(M)a^e} \mbox{~~for~~sufficiently~~large~~} q=p^e$ where $a=[k:k^p]$. 
In particular, Artinian F-finite local rings have finite F-representation type. 
\item
Let $R\hookrightarrow S$ be a finite local homomorphism of Noetherian local rings of prime characteristic $p$ such that 
$R$ is an $R$-module direct summand of $S$. If $S$ has finite F-representation type, so does $R$. 
\item
(\cite{SV}, Proposition 3.1.6]) Let $R =\bigoplus_{i\ge 0}R_i \subset S =\bigoplus_{i\ge 0}S_i$ be a Noetherian 
$\N$-graded rings with $R_0$ and $S_0$ fields of characteristic $p>0$ such that $R$ is an $R$-module direct summand of $S$. 
Assume in addition that $[S_0 : R_0] < \infty$. 
If $S$ has finite graded F-representation type, so does $R$. In particular, 
normal semigroup rings and rings of invariants of linearly reductive groups have finite graded F-representation type. 
\end{enumerate}
\end{Example}
\section{One-dimensional case}
In this section, we investigate whether one-dimensional complete local or $\N$-graded domains have finite F-representation type. 
\begin{Theorem}\label{one dim}
Let $(A,\m,k)$ be a one-dimensional complete local domain (resp. an $\N$-graded domain $A=\bigoplus_{i\ge 0}A_i$, $\m=\bigoplus_{i>0}A_i$ 
and $A_0\cong A/\m=k$) of prime characteristic $p$. 
Let $M$ be a finitely generated (resp. graded) $A$-module. 
Assume that $k$ is an algebraically closed field. 
Then for sufficiently large $e\gg 0$, 
\[
^eM\cong B^{\oplus rq} \oplus k^\ell \quad \mbox{$(q=p^e)$}
\]
where $B$ is the integral closure of $A$, $r$ is the rank of $M$, and $\ell$ is the length of $H_\m^0(M)$. 
In particular, $M$ has finite F-representation type. 
\end{Theorem}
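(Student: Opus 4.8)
The plan is to exploit that the integral closure $B$ is a complete discrete valuation ring and to transfer the entire problem into module theory over the principal ideal domain $B$. Since $A$ is a one-dimensional complete local domain whose residue field $k$ is algebraically closed (hence perfect), its integral closure $B$ in $K=\mathrm{Frac}(A)$ is a module-finite, complete, normal one-dimensional local domain, i.e.\ a complete DVR; because $k$ is algebraically closed the residue field of $B$ is again $k$, so by Cohen's structure theorem $B\cong k[[t]]$ for a uniformizer $t$. Write $\mathfrak{c}=(A:_K B)$ for the conductor; it is a nonzero ideal of $A$ and a $B$-ideal, hence $\mathfrak{c}=t^{c}B$ for some $c\ge 0$. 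I would first record the torsion data: set $T=H^0_\m(M)$, a submodule of finite length $\ell$, and note that in a one-dimensional domain $T$ is precisely the torsion submodule of $M$, with $M/T$ torsion-free of rank $r$.

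The central observation is a conductor estimate that uses perfectness of $k$: for every $b=\sum_i a_i t^i\in B$ one has $b^q=\sum_i a_i^{q}t^{iq}$ with $a_i^q\in k^q=k$, so $b^q\in k[[t^q]]$; and for $q=p^e\ge c$ every monomial $t^{iq}$ with $i\ge 1$ lies in $t^{c}B=\mathfrak{c}\subseteq A$, while the constant term lies in $k\subseteq A$. Hence
\[
B^{q}:=\{\,b^{q}\mid b\in B\,\}\subseteq A\qquad\text{for all }q=p^e\ge c.
\]
For such $e$ I claim that ${}^eM$ carries a $B$-module structure extending its Frobenius-twisted $A$-structure: define $b\cdot m:=b^{q}m$, which makes sense because $b^{q}\in A$ and $M$ is an $A$-module, is clearly a ring action, and restricts on $A$ to the action $a\cdot m=a^{q}m$ defining ${}^eM$. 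Thus, for $e\gg 0$, ${}^eM$ is a finitely generated module over the PID $B$ (finitely generated because $A$ is F-finite, being a complete local ring with perfect residue field, so ${}^eM$ is finitely generated over $A$, hence over $B\supseteq A$).

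Now the structure theorem for finitely generated modules over the PID $B$ gives ${}^eM\cong B^{\oplus s}\oplus \tau$ with $\tau$ the $B$-torsion submodule, and I only have to identify $s$ and $\tau$. The $B$-torsion of ${}^eM$ coincides with its $A$-torsion, namely $H^0_\m({}^eM)\cong {}^eH^0_\m(M)={}^eT$; since $k$ is perfect, Example~\ref{exffrt}(iv) gives ${}^eT\cong k^{\ell}$ for $e\gg 0$, and because $\m_B$ annihilates this module (again via $b\cdot x=b^qx$ with $b^q\in\mathfrak{c}\subseteq\m$) it is $\cong k^{\ell}$ as a $B$-module as well, so $\tau\cong k^{\ell}$. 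For the free rank, $s=\rank_B{}^eM$ equals $\rank_A{}^eM$ because $\rank_A B=1$, and $\rank_A{}^eM=rq$ since generically $M\cong K^{r}$ while ${}^eK$ has $\dim_K{}^eK=[K:K^{q}]=q$ (using that $k$ is perfect). This yields ${}^eM\cong B^{\oplus rq}\oplus k^{\ell}$, an isomorphism of $B$-modules and hence of $A$-modules, as desired.

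The graded case runs in parallel: there $B$ is the graded integral closure, a polynomial ring $k[t]$ with a suitable grading, $\mathfrak{c}$ is homogeneous, the estimate $B^{[q]}\subseteq A$ holds for $q\gg 0$, and the $B$-module structure together with the graded structure theorem over $k[t]$ gives the same decomposition up to the shifts recorded in the definition of graded F-representation type. I expect the main obstacle to be the conductor estimate $B^{q}\subseteq A$ and the verification that it endows ${}^eM$ with a genuine $B$-module structure compatible with the $A$-structure; this is exactly the step where algebraic closedness (really perfectness, giving $k^q=k$ and residue field $k$ on $B$) is indispensable, and it is what makes the decomposition into a free part and a torsion part automatic through the PID structure theorem, sidestepping any separate splitting argument for the sequence $0\to T\to M\to M/T\to 0$.
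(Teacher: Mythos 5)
Your argument is correct and follows essentially the same route as the paper: both establish $B^{q}\subseteq A$ for $q=p^{e}$ large (you via the conductor $\mathfrak{c}=t^{c}B$, the paper via the numerical semigroup of valuations and completeness), use this to endow $^{e}M$ with a $B$-module structure, and then invoke the structure theorem over the PID $B\cong k[[t]]$ together with the identifications $\rank(^{e}M)=rq$ and $H^0_\m(^{e}M)\cong{}^{e}H^0_\m(M)\cong k^{\ell}$. Your write-up is if anything slightly more detailed than the paper's on why the torsion part is exactly $H^0_\m$ and why the tail of $b^{q}$ lands in $A$, but there is no substantive difference in method.
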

\begin{proof}
In the case where $A$ is a complete local domain, $B$ is isomorphic to a formal power series ring $k[[t]]$. 
For $f\in B$, we set $v_B(f)=\min\{i\mid f\in t^iB\}$. 
Let $H=\{v_B(f)\mid f\in A\}$, and $c(H)=\min\{j\mid i\in H \mbox{~~if~~} i\ge j\}$. 
Since $\N\backslash H$ is a finite set and $A$ is complete, it follows that $t^i\in A$ for all $i\ge c(H)$. 
Let $n=\min \{n\mid \m^n H_\m^0(M)=0\}$ and $q=p^e\ge \max\{c(H),n\}$. 
Since $B^q\subset A$, $^eM$ has a $B$-module structure. 
Thus $^eM\cong B^{\oplus rq}\oplus H^0_\m(^eM)$ because $B$ is a principal ideal domain and $\rank(^eM)=rq$. 
Since $H^0_\m(^eM)\cong ^eH^0_\m(M)\cong k^\ell$, we conclude the assertion. 

In the case where $A$ is an $\N$-graded ring, $B$ is isomorphic to a polynomial ring $k[t]$. 
Since $A=k[t^{n_1},\dots,t^{n_r}]$ for some $n_i\in \N$ with $\gcd(n_1,\dots,n_r)=1$, 
we can prove the assertion similarly to the complete case. 
\end{proof}
The assumption that $k$ is algebraic closed is essential for this theorem. 
Let $A=\bigoplus_{i\ge 0}A_i$ be a one-dimensional $\N$-graded domain with $A_0=k$ a perfect field. 
Then $^eA\cong A^{1/q}$ has rank $[k:k^q]q=q$, and is decomposed to $A$-modules of rank one by degree; 
$A^{1/q}=\bigoplus_{i=0}^{q-1} M^{(e)}_i$ where 
\[
M^{(e)}_i=\bigoplus_{j\equiv i\!\!\mod q} [A^{\frac{1}{q}}]_{\frac{j}{q}} 
\]
where $[A^{\frac{1}{q}}]_{\frac{j}{q}}$ is the degree ${\frac{j}{q}}$ component of $A^{\frac{1}{q}}$. 
Let $B$ be the integral closure of $A$. 
Then it follows that $B$ is isomorphic to a polynomial ring $K[t]$ with $\deg t=1$ for some $K$, a finite degree extension of $k$. 
Note that $K$ is also a perfect field. 
We can write $A=k[\alpha_1t^{n_1},\dots,\alpha_rt^{n_r}]$ for some $n_1,\dots,n_r\in \N$ and $\alpha_1,\dots,\alpha_r\in K$. 
For $i\in \N$, we define 
\[
V_i:=\{\alpha\in K\mid \alpha t^i\in A\}  
\]
the $k$-vector subspace of $K$ which is a coefficient of $t^i$ in $A$. 
We have $V_i= K$ for all sufficiently large $i$ because $B/A$ is a graded $A$-module of finite length. 
We set 
\[
c=\min\{i\mid V_j=K \mbox{~for~all~} j\ge i\}. 
\]
For $q=p^e\ge c$, we have 
\begin{equation*}
M^{(e)}_i=
\left\{
	\begin{array}{ll}
	\displaystyle\bigoplus_{j\ge 1} K\cdot t^{j+\frac{i}{q}} & \mbox{ $(V_i=0)$, } \\[7mm]
	\displaystyle\bigoplus_{j\ge 0} K\cdot t^{j+\frac{i}{q}} & \mbox{ $(V_i=K)$, } \\[7mm]
	\displaystyle V_i^{\frac{1}{q}}\cdot t^{\frac{i}{q}}\oplus(\bigoplus_{j\ge 1} K\cdot t^{j+\frac{i}{q}}) & 
\mbox{ $(0\subsetneq V_i \subsetneq K)$.} 
	\end{array}
\right. 
\end{equation*}
It is easy to see that $M^{(e)}_i\cong B$ if $V_1=0$ or $K$. 
Note that $V_i^{1/q}=\{\alpha^{1/q}\mid \alpha\in V_i\}$ is also a $k$-vector subspace of $K$ since $K$ is a perfect field. 
\begin{Lemma}\label{isob}
Let the notation be as above. 
Let $q_1=p^{e_1}$, $q_2=p^{e_2}\ge c$ and $i_1,i_2\ge 0$ such that $0\subsetneq V_{i_1}, V_{i_2}\subsetneq K$. 
Then $M^{(e_1)}_{i_1}$ is isomorphic to $M^{(e_2)}_{i_2}$ as graded module up to shift of grading if and only if 
$\beta V^{1/q_1}_{i_1}=V^{1/q_2}_{i_2}$ for some $\beta\in K^*=K\backslash \{0\}$. 
\end{Lemma}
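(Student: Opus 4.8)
The plan is to reduce the statement to a question about the bottom-degree ``defect'' of these modules. For a proper nonzero $k$-subspace $W\subsetneq K$, write $L(W):=W\oplus\bigoplus_{j\ge 1}Kt^{j}\subset K[t]=B$, the $\Z$-graded $A$-submodule of $B$ whose degree-$0$ part is $W$ and all of whose strictly positive components equal the full field $K$. Factoring $t^{i/q}$ out of the explicit description of $M^{(e)}_{i}$ in the case $0\subsetneq V_{i}\subsetneq K$ exhibits $M^{(e)}_{i}\cong L(V_{i}^{1/q})$ up to a shift of grading. Writing $W_{1}=V_{i_{1}}^{1/q_{1}}$ and $W_{2}=V_{i_{2}}^{1/q_{2}}$, the lemma then becomes the assertion that $L(W_{1})\cong L(W_{2})$ up to shift if and only if $\beta W_{1}=W_{2}$ for some $\beta\in K^{*}$.

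The implication ``$\Leftarrow$'' is immediate: if $\beta W_{1}=W_{2}$ with $\beta\in K^{*}$, then multiplication by $\beta$ is a degree-$0$ automorphism of $B=K[t]$ that commutes with the action of $A\subset K[t]$ and is bijective, and it carries $L(W_{1})$ onto $\beta W_{1}\oplus\bigoplus_{j\ge 1}Kt^{j}=L(W_{2})$.

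For ``$\Rightarrow$'', I would use that $L(W_{1})$ and $L(W_{2})$ are rank-one torsion-free $A$-modules inside the common fraction field $\mathrm{Frac}(A)=\mathrm{Frac}(B)=K(t)$. The decisive step is to show that any graded isomorphism $\phi\colon L(W_{1})\to L(W_{2})$ (of some homogeneous degree $d$, the relative shift) is multiplication by a homogeneous element of $K(t)$. Indeed, for rank-one torsion-free modules the ratio $\gamma:=\phi(m)/m\in K(t)$ does not depend on the nonzero $m$ chosen: writing $m'/m=a/b$ with $a,b\in A$ gives $am=bm'$ in $K(t)$, so $A$-linearity forces $a\phi(m)=b\phi(m')$ and hence $\phi(m')/m'=\phi(m)/m$. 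Thus $\phi$ is multiplication by $\gamma$, and taking $m$ homogeneous shows $\gamma$ is homogeneous of degree $d$. Since the nonzero homogeneous elements of the graded field $K(t)$ are precisely the $\beta t^{d}$ with $\beta\in K^{*}$ and $d\in\Z$, we conclude $\beta t^{d}L(W_{1})=L(W_{2})$.

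It remains to compare the two graded modules degreewise. Now $\beta t^{d}L(W_{1})$ has component $\beta W_{1}$ in degree $d$ and the full field $K$ in every degree $>d$, whereas $L(W_{2})$ has the proper subspace $W_{2}$ in degree $0$ and $K$ in every degree $\ge 1$. Because $\beta\in K^{*}$ acts $k$-linearly and invertibly on the finite-dimensional $k$-space $K$, the image $\beta W_{1}$ is again a proper subspace; hence the unique degree carrying a proper component must agree, forcing $d=0$ and $\beta W_{1}=W_{2}$. The main obstacle is the identification in the previous paragraph: once one knows $\phi$ is multiplication by $\beta t^{d}$, the rest is a direct matching of graded pieces.
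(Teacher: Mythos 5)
Your proof is correct and follows essentially the same route as the paper: both arguments reduce to the fact that a graded homomorphism between these rank-one torsion-free modules is multiplication by a homogeneous element $\beta t^{n}$ of $K[t,t^{-1}]$, and then read off that an isomorphism forces $n=0$ and $\beta V_{i_1}^{1/q_1}=V_{i_2}^{1/q_2}$. The only difference is cosmetic: you normalize each $M^{(e)}_i$ to $L(V_i^{1/q})$ and spell out why $\phi$ must be multiplication by a homogeneous fraction, whereas the paper gets the same conclusion by embedding $\underline{\Hom}_A(M^{(e_1)}_{i_1},M^{(e_2)}_{i_2})$ into $\underline{\Hom}_C(C t^{i_1/q_1},C t^{i_2/q_2})\cong C(\tfrac{i_1}{q_1}-\tfrac{i_2}{q_2})$ with $C=K[t,t^{-1}]$.
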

\begin{proof}
A graded homomorphism $\phi :M^{(e_1)}_{i_1}\to M^{(e_2)}_{i_2}$ can be identified with some homogeneous element of $B$: 
\[
\underline{\Hom}_A(M^{(e_1)}_{i_1}, M^{(e_2)}_{i_2})\hookrightarrow  \underline{\Hom}_C(C\cdot t^\frac{i_1}{q_1}, C\cdot t^\frac{i_2}{q_2})
\cong C\Bigl(\frac{i_1}{q_1}-\frac{i_2}{q_2}\Bigr), 
\]
where $C=B[t^{-1}]=K[t,t^{-1}]$. 
Let $\phi\in \Hom_A(M^{(e_1)}_{i_1}, M^{(e_2)}_{i_2})$ be a homogeneous homomorphism which maps to a homogeneous element 
$\beta t^n\in C\Bigl(\frac{i_1}{q_1}-\frac{i_2}{q_2}\Bigr)$ under the above inclusion. 
Then for $g\cdot t^{i_1/q}\in M^{(e_1)}_{i_1}\subset Bt^{i_1/q}$ with $g\in B$, $\phi(g\cdot t^{i_1/q})=\beta g\cdot t^{n+i_2/q}$. 
Hence $n\ge 0$ and $\phi$ is an isomorphism if and only if $n=0$ and $\beta V^{1/q_1}_{i_1}=V^{1/q_2}_{i_2}$. 
Hence there is one-to-one correspondence between the set of graded 
isomorphisms from $M^{(e_1)}_{i_1}$ to $M^{(e_2)}_{i_2}$ and the set $\{\beta\in K^* \mid \beta V^{1/q_1}_{i_1}=V^{1/q_2}_{i_2}\}$. 

\end{proof}
We will present examples of one-dimensional domain which does not have finite (graded) F-representation type. 
\begin{Example}\upshape
Let $k=\bigcup_{e\ge 1}\F_2(u^{1/2^e})$ be the perfect closure of a rational function field $\F_2(u)$. 
Let $A=k[{x},{y}]/({x}^4+{x}^2{y}^2+u{x}{y}^3+{y}^4)$, $\deg {x}=\deg {y}=1$, 
and $\hat{A}=k[[{x},{y}]]/({x}^4+{x}^2{y}^2+u{x}{y}^3+{y}^4)$. 
Since ${x}^4+{x}^2+v^q{x}+1$ is a reduced polynomial in $\F_2[v,{x}]$ for all $q=2^e$, 
it follows that ${x}^4+{x}^2{y}^2+u{x}{y}^3+{y}^4$ is a reduced polynomial in $k[{x},{y}]$. 
We will prove that $A$ does not have finite graded F-representation type, 
and $\hat{A}$ does not have finite F-representation type. 

Let $\alpha\in \overline{k}$ be a root of the irreducible polynomial ${x}^4+{x}^2+u{x}+1$, and set $K=k(\alpha)$. 
Then $A\cong k[\alpha t,t]\subset K[t]$ and the integral closure $B$ of $A$ is isomorphic to $K[t]$, a polynomial ring over $K$. 
Note that $0\subsetneq V_i\subsetneq K$ if and only if $0\le i \le 2$, and $V_i=\bigoplus _{j=0}^{i}k\alpha^j$ for $0\le i \le 2$ 
and $V_i=K$ for all $i\ge 3$. 

We will show that $M^{(e_2)}_1\not\cong M^{(e_1)}_1$ for any $e_2>e_1\ge 2$. 
Assume, to the contrary, that $M^{(e_2)}_1\cong M^{(e_1)}_1$ for some $e_2>e_1\ge 2$. 
We set $q_i=2^{e_i}$, and $e=e_2-e_1$. 
Then there exists $\beta\in K^*$ such that $\beta V_1^{1/q_1}=V_1^{1/q_2}$ by the previous lemma. 
Since $V_1^{1/q}=k\oplus k\alpha^{1{/q}}$, 
there exist $a,b,c,d\in k$ such that 
\[
\beta=a+b\alpha^{1/q_2},~~ \beta\alpha^{1/q_1}=c+d\alpha^{1/q_2}. 
\]
It follows that 
\[
b^{q_2}\alpha^{2^{e}+1}+a^{q_2}\alpha^{2^{e}}-d^{q_2}\alpha-c^{q_2}=0. 
\]
We will show that $1,\alpha,\alpha^{2^e},\alpha^{2^e+1}$ are linearly independent over $k$ for any $e\ge 1$. 
If this is proved, then $a=b=c=d=0$ which contradicts that $\beta\neq 0$. 
We claim that or $e\ge 2$ there exist polynomials $f_e,g_e,h_e\in k[u]$ such that $f_e\neq 0$, $g_e\neq 0$,  
\[
\alpha^{2^e}=
	f_{e}\alpha^2 +g_{e}\alpha+h_{e}
\]
and $\deg_u f_e=\deg_u g_e-1$ if $e$ is even and $\deg_u f_e=\deg_u g_e+1$ if $e$ is odd. 
We prove this claim by induction on $e$. 
If $e=2$, then $\alpha^4={\alpha}^2+u{\alpha}+1$ and thus $f_2=1$ and $g_2=u$. 
If the claim holds true for $e$, 
then 
\[
\alpha^{2^{e+1}}=f_{e}^2\alpha^4 +g_{e}^2\alpha^2+h_{e}^2=f_{e}^2({\alpha}^2+u{\alpha}+1)+g_{e}^2\alpha^2+h_{e}^2
={(f_e^2+g_e^2)}\alpha^2+uf_e^2\alpha+f_e^2+h_e^2, 
\]
and thus $f_{e+1}=f_e^2+g_e^2$ and $g_{e+1}=uf_e^2$. Note that $f_{e+1}\neq 0$ as $\deg_u f_e\neq \deg_u g_e$. 
Since $\deg_u f_{e+1}=2\max\{\deg_u f_e, \deg_u g_e\}$ and $\deg{g_{e+1}}=2\deg_u f_e+1$, 
the claim also holds true for $e+1$. By induction, the claim is true for every $e\ge 2$. 
Hence it follows that $1,\alpha,\alpha^{2^e},\alpha^{2^e+1}$ are linearly independent over $k$ for all $e\ge 1$ 
as $1,~\alpha,~\alpha^2,~\alpha^3$ are linearly independent over $k$. 
Therefore $M^{(e_2)}_1\not\cong M^{(e_1)}_1$ for all $e_2>e_1\ge 3$. 
Hence $A$ does not have finite graded F-representation type. 

We will prove that $\hat{A}$ does not have finite F-representation type. 
Let $\hat{B}$ be the integral closure of $\hat{A}$. 
Note that $B\cong B\otimes_A \hat{A}=K[[t]]$ and $\hat{A}^{1/q}\cong \bigoplus_{i=0}^{q-1} \hat{M}^{(e)}_i$ 
where $\hat{M}^{(e)}_i=M^{(e)}_i\otimes \hat{A}$. 
Assume that there is an isomorphism $\phi:\hat{M}^{(e_2)}_1\to \hat{M}^{(e_1)}_1$ for some $e_2>e_1\ge 3$. 
Let $\hat{\Psi}$ be the inclusion 
$\Hom_{\hat{A}}(\hat{M}^{(e_2)}_1,\hat{M}^{(e_1)}_1)\hookrightarrow  \Hom_{\hat{B}}(\hat{B}t^{1/q_1},\hat{B}t^{1/q_2})\cong \hat{B}$, 
and $\hat{\Psi}(\phi)=\sum_{j\ge 0} \beta_j t^j\in \hat{B}$ $(\beta_j\in K)$. 
Since $\phi$ is an isomorphism, it follows that that $\beta_0\neq 0$ and $\beta_0 V_1^{1/q_1}=V_1^{1/q_2}$, which is a contradiction. 
\end{Example}
\begin{Example}\upshape
Let $k=\bigcup_{e\ge 1}\F_2(u^{1/2^e})$, 
$A=k[{x},{y}]/({x}^6+{x}{y}^5+u{y}^6)$, $\deg {x}=\deg {y}=1$, and $\hat{A}=k[[{x},{y}]]/({x}^6+{x}{y}^5+u{y}^6)$. 
Then $A$ does not have finite graded F-representation type, 
and $\hat{A}$ does not have finite F-representation type. 
\end{Example}
If $k$ is a finite field, then we can prove that $A$ has finite F-representation type. 
\begin{Theorem}\label{one dim ff}
Let $A$ be a one dimensional complete local or $\N$-graded domain of prime characteristic $p$. 
If $k$ a finite field, 
then $A$ has finite F-representation type. 
\end{Theorem}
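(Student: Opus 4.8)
The plan is to show that, as $e$ ranges over $\N$, the indecomposable $A$-module summands of $^eA\cong A^{1/q}$ (with $q=p^e$) fall into only finitely many isomorphism classes; since the Krull--Schmidt theorem holds in our setting, this is exactly finite F-representation type. The one new feature compared with Theorem~\ref{one dim} is that the residue field is no longer algebraically closed, so the summands are governed by certain $k$-subspaces of $K$ rather than being forced to be copies of $B$. Finiteness of $k$ enters precisely because a finite field $K$ is finite-dimensional over $k$ and hence has only finitely many $k$-subspaces.

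In the graded case I would argue exactly as in the development preceding Lemma~\ref{isob}. Here $B\cong K[t]$ with $K/k$ finite, so $K$ is finite. For $q\ge c$ one has $^eA\cong\bigoplus_{i=0}^{q-1}M^{(e)}_i$, and $M^{(e)}_i\cong B$ whenever $V_i\in\{0,K\}$. The remaining summands, those with $0\subsetneq V_i\subsetneq K$, are classified up to shift by Lemma~\ref{isob}: $M^{(e_1)}_{i_1}\cong M^{(e_2)}_{i_2}$ iff $V^{1/q_1}_{i_1}$ and $V^{1/q_2}_{i_2}$ lie in the same $K^*$-orbit of $k$-subspaces of $K$. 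As $K$ is finite it has only finitely many $k$-subspaces, so only finitely many isomorphism classes occur among all $M^{(e)}_i$ with $q\ge c$. The finitely many remaining $e$ with $p^e<c$ contribute finitely many further indecomposables by Krull--Schmidt, and collecting $B$ with these finitely many classes exhibits finite graded F-representation type.

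For the complete local case I would first fix the structure: by the Cohen structure theorem $B=K[[t]]$ with $K/k$ finite, hence $K$ finite, and the conductor is $(A:B)=t^cB$ for some $c$, so $A$ is cut out in $B=\bigoplus_j Kt^j$ by a closed $k$-subspace $\mathcal L$ with $\mathcal L\supseteq\{(\gamma_j):\gamma_j=0\text{ for }j<c\}$; equivalently $\mathcal L$ is determined by its image $\bar{\mathcal L}$ in $K^c$, all coordinates $j\ge c$ being free. Writing $s=t^{1/q}$ and using that taking $q$-th roots is additive in characteristic $p$ and that $K$ is perfect, I get $A^{1/q}=\{\sum_j\beta_js^j\in K[[s]]:(\beta_j)\in\mathcal L^{1/q}\}\subseteq B^{1/q}=K[[s]]$, where $\mathcal L^{1/q}$ is again defined by a subspace $\bar{\mathcal L}^{1/q}\subseteq K^c$ on the coordinates $j<c$, all coordinates $j\ge c$ free. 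Grouping positions $j$ by their residue $i=j\bmod q$ decomposes $B^{1/q}=\bigoplus_{i=0}^{q-1}s^iB$ into $A$-submodules, each isomorphic to $B$; since all coordinates $j\ge c$ are unconstrained and uncoupled, the strands $s^iB$ with $c\le i<q$ are free direct summands of $A^{1/q}$, and I would split them off to obtain $A^{1/q}\cong B^{\oplus(q-c)}\oplus M'_q$, where $M'_q$ is the preimage of $\bar{\mathcal L}^{1/q}$ under the $A$-linear leading-coefficient map $\bigoplus_{i=0}^{c-1}s^iB\to K^c$.

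The crux is then that $M'_q$ depends only on the subspace $\bar{\mathcal L}^{1/q}\subseteq K^c$: the ambient module $\bigoplus_{i<c}s^iB\cong B^c$ and the leading-coefficient map to $K^c$ are the same $A$-module data for every $q$ (as $s^q=t$ throughout), so equal subspaces yield isomorphic $M'_q$. Since $K^c$ is finite it has only finitely many $k$-subspaces, whence $\{M'_q:q\ge c\}$ meets only finitely many isomorphism classes; together with $B$ and the Krull--Schmidt contributions from the finitely many $e$ with $p^e<c$, this gives finite F-representation type. I expect the main obstacle to be exactly this local decomposition: unlike the graded case there is no grading on $A$, so the naive ``degree mod $q$'' splitting of $A^{1/q}$ need not preserve $A$. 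The point that rescues the argument is that the conductor forces every high-degree coordinate to be free and decoupled, confining the nontrivial behaviour to the fixed finite-dimensional space $K^c$, where finiteness of $K$ can be applied.
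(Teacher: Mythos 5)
Your proposal is correct and follows essentially the same route as the paper: both split $A^{1/q}$ (for $q\ge c$) into free strands $t^{i/q}B$, $c\le i\le q-1$, plus a remaining piece supported in degrees below the conductor, and both use finiteness of $K$ to see that this remaining piece (your $M'_q$, the paper's $N^{(e)}$) realizes only finitely many isomorphism classes --- the paper phrases this as $N^{(e_1)}\cong N^{(e_2)}$ whenever $e_1\equiv e_2 \pmod f$ with $\#K=p^f$, which is exactly the finiteness you extract from the finitely many $k$-subspaces $\bar{\mathcal{L}}^{1/q}\subset K^c$. The graded case is handled identically via Lemma~\ref{isob}.
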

\begin{proof}
In the case where $A$ is an $\N$-graded ring, 
since $\{V_i^{1/q}\mid q=p^e,~i\ge 0\}$ is a finite set, we have the assertion by Lemma \ref{isob}. 

Assume that $A=(A,\m,k)$ is a one-dimensional complete local domain. 
Let $B=K[[t]]$ be a normalization of $A$, and set $D=k+tB=k[[\alpha t\mid \alpha\in K]]$. 
For $\sum_{i\ge n}\beta_i t^i\in B$, $\beta_n\neq 0$, we set $\initial_B(f)=\beta_nt^n$. 
For $i\in\N$, let 
\[
V_i=\{\beta\mid \initial_B(f)=\beta t^i \mbox{~~for~~some~~}f\in A\}. 
\]
Since $\dim_kB/A<\infty$, it follows that $V_i=K$ for all sufficiently large $i$. 
We set 
\[
c=\min\{i\mid V_j=K \mbox{~~for~~all~~}j\ge i\}. 
\]
Since $A$ is complete, we have $\beta t^n\in A$ for all $\beta\in K$ and $n\ge c$. 
Therefore, $D^q\subset A$ for all $q=p^e\ge c$, and thus $A\subset D\subset A^{1/q}$. 
In particular, $A^{1/q}$ is a $D$-module. 
For $i$ with $V_i\neq 0$, there exists finite set $G_i\subset A$ satisfying following properties: 
\begin{enumerate}
\item $\{\initial_B(g)\mid g\in G_i\}$ is a $k$-basis of $V_it^i=\{\beta t^i\mid \beta\in V_i\}$. 
\item For all $g\in G_i$, $g$ has a form $g=\beta_it^i+\sum_{j> i}\beta_jt^j$, $\beta_j\not\in V_j$. 
\end{enumerate}
Note that if $i\ge c$ then $G_i=\{\alpha_1t^i,\dots,\alpha_rt^i\}$ where $r=[K:k]$ and $\alpha_1,\dots,\alpha_r$ is a $k$-basis of $K$. 
It is easy to prove that $\bigcup_{i=0}^{q-1}G_i^{1/q}$ is a system of generators of $A^{1/q}$ as a $D$-module for $q\ge c$. 
Let 
\begin{eqnarray*}
N^{(e)}&=&D\cdot (\bigcup_{i=0}^{c-1}G_i^{1/q})\\
M^{(e)}_i&=&D\cdot G_i^{1/q}~~~~~\mbox{~~for~~} c\le 0\le q-1. 
\end{eqnarray*}
Then 
\[
A^{1/q}=(\bigoplus_{i=c}^{q-1}M_i^{(e)})\oplus N^{(e)}, 
\]
and $M^{(e)}_i\cong B$ for all $c\le i\le q-1$. 
Assume that $\#K=p^f$. 
To complete the proof, we prove that $N^{(e_1)}\cong N^{(e_2)}$ for $e_1,e_2\in \N$ with $q_1=p^{e_1}, q_2=p^{e_2}\ge c$, 
and $e_1\equiv e_2 \mod f$. 
Let $\varphi:\bigoplus_{i=0}^{c-1}Bt^{i/q_1}\to \bigoplus_{i=0}^{c-1}Bt^{i/q_2}$, 
$t^{i/q_1}\mapsto t^{i/q_2}$ be an isomorphism of free $B$-modules (and hence an isomorphism as $D$-modules). 
Note that $N^{(e_j)}$ is a $D$-submodule of $\bigoplus_{i=0}^{c-1}Bt^{i/q_j}$ for $j=1,2$ by the definition of $G_i$. 
Since $\beta^{p^f}=\beta$ for $\beta\in K$, 
$\varphi(g^{1/q_1})=g^{1/q_2}$ for $g=\sum_{i=0}^{c-1}\beta_it^i\in B$ if $e_1\equiv e_2 \mod f$. 
Therefore $\varphi$ induces one-to-one correspondence between 
$\bigcup_{i=0}^{c-1}G_i^{1/q_1}$ and $\bigcup_{i=0}^{c-1}G_i^{1/q_2}$ if $e_1\equiv e_2 \mod f$. 
This implies that if $e_1\equiv e_2 \mod f$, 
the restriction of $\varphi$ to $N^{(e_1)}$ is an isomorphism form $N^{(e_1)}$ to $N^{(e_2)}$ as $D$-modules, 
and thus as $A$-modules. 
Therefore, $A$ has finite F-representation type. 
\end{proof}
\section{Higher dimensional case}
We end this paper with a few observations on higher dimension rings of finite F-representation type. 
Let $k$ be a field of positive characteristic $p$ with $[k:k^p]<\infty$. 
We begin with the question posed by Brenner. 
\begin{Question}[Brenner]\upshape
Does the ring $k[{x}, {y}, {z}]/({x}^2 + {y} ^3 + {z}^7)$ have finite F-representation type?
\end{Question}
\begin{Observation}\upshape
Let ${S}$ be an F-finite Cohen-Macaulay local (resp. graded) ring of finite (resp. graded) Cohen-Macaulay type, 
and $R$ a local ring such that ${S}\subset R \subset {S}^{1/q'}$ for some $q'=p^{e'}$. 
Let $M$ be an $R$-module (resp. a graded $R$-module). 
Since $({S}^{1/q'})^{q}\subset R$ for $q\ge q'$, 
$^{e}\!M$ has an ${S}^{1/q'}$-module structure for $e\ge e'$. 
If $M$ is a maximal Cohen-Macaulay $R$-module, then $^{e}\!M$ is a maximal Cohen-Macaulay ${S}^{1/q'}$-module, 
and thus $M$ has finite F-representation type. 
In particular, if $R$ is Cohen-Macaulay, then $R$ has finite F-representation type. 
\end{Observation}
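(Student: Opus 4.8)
The plan is to reduce everything to the finite Cohen--Macaulay type of $S$, by realizing $^eM$ as a maximal Cohen--Macaulay module over $S^{1/q'}$ for all large $e$ and then pushing the resulting decomposition down to $R$ by restriction of scalars. The graded case runs verbatim, with $\mathbb{Q}$-graded modules in place of modules.

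First I would pin down the $S^{1/q'}$-structure asserted in the statement. For $q=p^e$ with $e\ge e'$ one has $(S^{1/q'})^q=S^{p^{e-e'}}\subseteq S\subseteq R$, so $\sigma^q\in R$ for every $\sigma\in S^{1/q'}$; hence $\sigma\cdot m:=\sigma^q m$ (computed in the original $R$-module $M$) is a well-defined $S^{1/q'}$-action on $^eM$ extending the $R$-action. Next I would identify this module. Using the ring isomorphism $F^{e'}\colon S^{1/q'}\xrightarrow{\ \sim\ }S$, $\sigma\mapsto\sigma^{q'}$, transport of scalars shows that $^eM$, regarded as an $S$-module, is exactly the Frobenius pushforward $^{e-e'}M$ over $S$. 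Since $S$ is F-finite this is a finitely generated $S$-module, and because local cohomology commutes with the Frobenius pushforward we get $\operatorname{depth}_S{}^{e-e'}M=\operatorname{depth}_SM$ and $\dim_S{}^{e-e'}M=\dim_SM$.

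Now $M$ is maximal Cohen--Macaulay over $R$, and since $R$ is module-finite over the local ring $S$, depth and dimension over $R$ and over $S$ coincide; thus $M$ is maximal Cohen--Macaulay over $S$, and transporting back, $^eM$ is maximal Cohen--Macaulay over $S^{1/q'}$. The isomorphism $S\cong S^{1/q'}$ carries the finite Cohen--Macaulay type of $S$ to $S^{1/q'}$, so there are only finitely many indecomposable maximal Cohen--Macaulay $S^{1/q'}$-modules $N_1,\dots,N_s$, and each $^eM$ with $e\ge e'$ is a finite direct sum of copies of these. Restriction of scalars along the module-finite inclusion $R\subset S^{1/q'}$ turns this into an isomorphism of $R$-modules, with each $N_i$ finitely generated over $R$; adjoining the finitely many modules $^0M,\dots,{}^{e'-1}M$ to the list then shows $M$ has finite F-representation type. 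Taking $M=R$ in the Cohen--Macaulay case gives the final sentence.

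I expect the only substantive step to be the identification of $^eM$ with the Frobenius pushforward $^{e-e'}M$ over $S$ and the attendant preservation of maximal Cohen--Macaulayness; everything else is bookkeeping. The point that most needs care is that the decomposition produced is a priori only an $S^{1/q'}$-module decomposition, so I must make explicit that restriction of scalars along $R\subset S^{1/q'}$ yields the $R$-module decomposition required by the definition of finite F-representation type.
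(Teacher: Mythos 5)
Your proposal is correct and follows essentially the same route as the paper: the Observation's own argument is exactly to endow $^eM$ with an $S^{1/q'}$-module structure via $(S^{1/q'})^q\subset R$, note that $^eM$ is then maximal Cohen--Macaulay over $S^{1/q'}\cong S$, and invoke finite Cohen--Macaulay type. Your write-up merely makes explicit the details the paper leaves implicit (the identification with a Frobenius pushforward over $S$, preservation of depth via local cohomology, and the restriction of scalars back to $R$), all of which are sound.
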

\begin{Example}\upshape
Let $R=k[s^q,st,t]\cong k[x,y,z]/(y^q-xz^q)$. 
Since $k[s^q,t^q]\subset R \subset (k[s^q,t^q])^{1/q}$, $R$ has finite F-representation type. 
\end{Example}
\begin{Example}\upshape
Let ${S}$ be an F-finite regular local ring  (resp. a polynomial ring over a field), 
and let $f\in S$ be an element (resp. a homogeneous element), and $R=S[f^{1/q}]$. 
Then $R$ has finite F-representation type. 
In particular, $k[{x}, {y}, {z}]/({x}^2 + {y} ^3 + {z}^7)$ has finite F-representation type if $p=2,3$, or $7$. 
\end{Example}
We can prove a little more general theorem. 
\begin{Theorem}
Let $R$ be an F-pure complete local (resp. graded) domain of finite F-representation type,  
$e_1,\dots,e_r$ positive integers, and $q_i=p^{e_i}$. 
Let $f_1,\dots,f_r$ be (resp. homogeneous) elements of $R$, and  
\[
S=R[{x}_1,\dots,{x}_r]/({x}_1^{q_1}+f_1,\dots,{x}_r^{q_r}+f_r). 
\]
Then $S$ has finite (resp. graded) F-representation type. 
\end{Theorem}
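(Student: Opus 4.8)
The plan is to study the Frobenius pushforward $^eS$ directly as an $S$-module and to show that, as $e$ grows, only finitely many indecomposable $S$-modules occur as its summands. Set $q'=p^{e'}=\max_i q_i$ and $N=\prod_{i=1}^r q_i$. Since $S=R[x_1,\dots,x_r]/(x_1^{q_1}+f_1,\dots,x_r^{q_r}+f_r)$ is a complete intersection over $R$, it is free as an $R$-module with basis the monomials $x^a=x_1^{a_1}\cdots x_r^{a_r}$, $0\le a_i<q_i$; in particular $S$ is module-finite over the F-finite ring $R$, hence F-finite, and (being complete local resp.\ graded) Krull--Schmidt holds for its finitely generated modules. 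The first key point I would establish is that for $q=p^e\ge q'$ one has in $S$ the identity $x_i^q=(x_i^{q_i})^{q/q_i}=(-f_i)^{q/q_i}\in R$, so the $e$-th Frobenius $F^e:S\to S$ takes values in $R$. Consequently each $x_i$ acts on $^eS$ by multiplication by the scalar $c_i:=(-f_i)^{q/q_i}\in R$, and the free $R$-basis $\{x^a\}$ splits $^eS$ into $N$ copies: $^eS\cong W_e^{\oplus N}$ as $S$-modules, where $W_e$ denotes the $R$-module $^eR$ made into an $S$-module by letting each $x_i$ act as multiplication by $c_i$.

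Next I would identify $W_e$ inside $R^{1/q}$. Using the $R$-module isomorphism $^eR\cong R^{1/q}$ (valid since $R$ is a domain), multiplication by $c_i=(-f_i)^{q/q_i}$ on $^eR$ corresponds to multiplication by $c_i^{1/q}=(-f_i)^{1/q_i}\in R^{1/q_i}\subseteq R^{1/q'}$ on $R^{1/q}$. Hence the $S$-action on $W_e$ factors through the reduced subring $\bar S:=R[(-f_i)^{1/q_i}:1\le i\le r]\subseteq R^{1/q'}$, the image of $S$, and $W_e\cong R^{1/q}$ as a $\bar S$-module via the inclusions $\bar S\subseteq R^{1/q'}\subseteq R^{1/q}$. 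Because $\bar S$ is a quotient of $S$, a module on which $S$ acts through $\bar S$ has the same submodules and direct-sum decompositions over $S$ and over $\bar S$, so it suffices to bound the indecomposable $\bar S$-summands of $R^{1/q}$. Correctly pinning down this $S$-module structure of $^eS$ and passing to the reduced ring $\bar S$ when $S$ itself fails to be reduced (which can happen even for regular $R$) is where I expect the main bookkeeping difficulty to lie.

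Finally I would import the finite F-representation type of $R$ through the ring isomorphism $\tau:R\xrightarrow{\sim}R^{1/q'}$, $r\mapsto r^{1/q'}$. Pulling back the $R^{1/q'}$-module $R^{1/q}$ along $\tau$ yields the $R$-module $R^{1/(q/q')}={}^{e-e'}R$, which by finite F-representation type of $R$ is a direct sum of modules from a fixed finite list $R_1,\dots,R_s$ of indecomposable $R$-modules; transporting back, $R^{1/q}$ is, as an $R^{1/q'}$-module, a direct sum of the fixed modules $\tau_*R_1,\dots,\tau_*R_s$, a list independent of $q$. Since $R\subseteq\bar S\subseteq R^{1/q'}$ and $R^{1/q'}$ is module-finite over $R$, it is module-finite over $\bar S$; restricting each $\tau_*R_j$ to $\bar S$ and decomposing by Krull--Schmidt produces a finite set $\Sigma$ of indecomposable $\bar S$-modules so that every $R^{1/q}$, hence every $W_e$ and every $^eS\cong W_e^{\oplus N}$, is a direct sum of members of $\Sigma$. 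This yields finite F-representation type of $S$. In the graded case the same argument runs verbatim with all isomorphisms taken up to shift of grading, so $\Sigma$ is finite up to shift and $S$ has finite graded F-representation type. The F-purity of $R$, i.e.\ the splitting of $R\hookrightarrow R^{1/q}$, can be invoked to realize these decompositions via explicit $R$-module splittings, but the core of the argument rests on the free $R$-module structure of $S$ together with the finite F-representation type of $R$ transported across $\tau$.
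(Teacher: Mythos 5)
Your proof is correct, but it takes a genuinely different route from the paper's. The paper first treats the case $f_1=\dots=f_r=0$, where $(x_1,\dots,x_r)\cdot{}^eS=(x_1,\dots,x_r)^{[q]}S=0$ for $e\ge\max_i e_i$, so that a decomposition of $^eS$ as an $R$-module (available because $S$ is $R$-free of finite rank and $R$ has finite F-representation type) is automatically a decomposition as an $S$-module; it then handles general $f_i$ by using F-purity to realize $S$ as a direct summand of $R^{1/\tilde{q}}[x_1,\dots,x_r]/((x_1+f_1^{1/q_1})^{q_1},\dots,(x_r+f_r^{1/q_r})^{q_r})\cong R^{1/\tilde{q}}[x_1,\dots,x_r]/(x_1^{q_1},\dots,x_r^{q_r})$ and invoking the stability of finite F-representation type under direct summands (Example \ref{exffrt} (v),(vi)). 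You instead compute $^eS$ head-on: since $F^e$ maps $S$ into $R$ once $q\ge q'$, the monomial basis gives $^eS\cong W_e^{\oplus N}$ as $S$-modules, $W_e\cong R^{1/q}$ regarded as a module over $\bar S=R[(-f_i)^{1/q_i}]\subseteq R^{1/q'}$, and the Frobenius twist $\tau$ transports the decomposition of $R^{1/(q/q')}$ supplied by finite F-representation type of $R$ into a decomposition of $R^{1/q}$ over $R^{1/q'}$, hence over $\bar S$ and over $S$. The individual steps check out: the summands $Rx^a$ of $^eS$ are $S$-submodules because $x_i^q=(-f_i)^{q/q_i}\in R$, the $x_i$-action on $R^{1/q}$ is multiplication by $(-f_i)^{1/q_i}$, the surjection $S\twoheadrightarrow\bar S$ is independent of $e$, and only existence (not uniqueness) of decompositions into indecomposables is needed, so Krull--Schmidt is not essential. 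What your approach buys is the finer structural statement $^eS\cong W_e^{\oplus N}$ and, more strikingly, the elimination of the F-purity hypothesis, which the paper uses only for its direct-summand reduction; what the paper's approach buys is brevity, since it reduces everything to already-established closure properties. In the graded case your argument does go through, but one should record that $\phi_e\colon S\to R$, $s\mapsto s^q$, multiplies degrees by $q$, so the summands $Rx^a$ appear as copies of $W_e$ shifted by $\deg(x^a)/q$, which is exactly what graded finite F-representation type permits.
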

\begin{proof}
Note that if $R$ is a graded ring, then $S$ is also a graded ring by assigning $\deg({x}_i)=\deg(f_i)/q_i$. 

Let $\tilde{e}=\max\{e_1,\dots,e_r\}$ and $\tilde{q}=p^{\tilde{e}}$. 
First, we prove the theorem in the case of $f_i=0$ for all $i$. 
Since $S=R[{x}_1,\dots,{x}_r]/({x}_1^{q_1},\dots,{x}_r^{q_r})$ is a free $R$-module of finite rank, 
$S$ has finite F-representation type as an $R$-module. 
On the other hand, since $({x}_1,\dots,{x}_r)\cdot{}^eS=({x}_1,\dots,{x}_r)^{[q]}S=0$ for $e\ge \tilde{e}$, 
a decomposition of ${}^eS$ as an $R$-module can be regarded a decomposition as an $S$-module. 
Hence $S$ has finite F-representation type. 

In the general case, since $R$ is F-pure, $R$ is a direct summand of $R^{1/\tilde{q}}$, 
and thus $R[{x}_1,\dots,{x}_r]$ is a direct summand of $R^{1/\tilde{q}}[{x}_1,\dots,{x}_r]$. 
Hence $S$ is a direct summand of 
\begin{eqnarray*}
&&R^{1/\tilde{q}}[{x}_1,\dots,{x}_r]/({x}_1^{q_1}+f_1,\dots,{x}_r^{q_r}+f_r)\\
&=&R^{1/\tilde{q}}[{x}_1,\dots,{x}_r]/(({x}_1+f_1^{1/q_1})^{q_1},\dots,({x}_r+f_r^{1/q_r})^{q_r})\\
&\cong& R^{1/\tilde{q}}[{x}_1,\dots,{x}_r]/({x}_1^{q_1},\dots,{x}_r^{q_r}). 
\end{eqnarray*}
Since $R^{1/\tilde{q}}$ has finite F-representation type, $R^{1/\tilde{q}}[{x}_1,\dots,{x}_r]/({x}_1^{q_1},\dots,{x}_r^{q_r})$ has 
finite F-representation type as proved above. 
Therefore $S$ has finite F-representation type by Example \ref{exffrt} (v) and (vi). 
\end{proof}

\end{document}